\newcounter{results}[section]
\newtheorem{thm}[results]{Theorem}
\newtheorem{lem}[results]{Lemma}
\newtheorem{conj}[results]{Conjecture}
\newtheorem{ques}[results]{Question}
\DeclareMathOperator{\dg}{deg}
\newcommand{\flr}[1]{\left\lfloor #1 \right\rfloor}
\newcommand{\cel}[1]{\left\lceil #1 \right\rceil}
\newcommand{\R}{\mathbb{R}}
\author{
	Zhanar Berikkyzy\affiliationmark{1}
  \and Axel Brandt\affiliationmark{2}
  \and Sogol Jahanbekam\affiliationmark{3}\\
  \and Victor Larsen\affiliationmark{4}
  \and Danny Rorabaugh\affiliationmark{5}}
\title[List-antimagic labeling of vertex-weighted graphs]{List-antimagic labeling of vertex-weighted graphs\thanks{Research supported in part by NSF grant DMS-1427526, ``The Rocky Mountain--Great Plains Graduate Research Workshop in Combinatorics".}}
\affiliation{
  Fairfield University, CT, USA\\
  Northern Kentucky University, KY, USA\\
  San Jos\'e State University, CA, USA\\
  University of Sioux Falls, SD, USA\\
  Knoxville, TN, USA}
\keywords{antimagic labeling; combinatorial Nullstellensatz; list coloring; weighted graph}
\begin{document}
\publicationdetails{23}{2021}{3}{5}{5631}
\maketitle
\begin{abstract}
A graph $G$ is \emph{weighted--$k$--list--antimagic} if for any vertex weighting $\omega\colon V(G)\to\mathbb{R}$ and any list assignment $L\colon E(G)\to2^{\mathbb{R}}$ with $|L(e)|\geq |E(G)|+k$ there exists an edge labeling $f$ such that $f(e)\in L(e)$ for all $e\in E(G)$, labels of edges are pairwise distinct, and the sum of the labels on edges incident to a vertex plus the weight of that vertex is distinct from the sum at every other vertex.
In this paper we prove that every graph on $n$ vertices having no $K_1$ or $K_2$ component is weighted--$\left\lfloor{\frac{4n}{3}}\right\rfloor$--list--antimagic.
\end{abstract}

\section{Introduction} \label{sec:intro}
In this paper we consider simple, finite graphs. In an edge-labeling of a graph $G$, we define the \textit{vertex sum} at a vertex $v$ to be the sum of labels of edges incident to $v$. 
A graph $G$ is \textit{antimagic} if there exists a bijective edge labeling from $E(G)$ to $\{1,\dotsc,|E(G)|\}$ such that the vertex sums are pairwise distinct. 

The concept of antimagic graphs was first introduced by Hartsfield and Ringel in \cite{HR90}.
Excluding $K_2$, they proved that cycles, paths, complete graphs, and wheels are antimagic and they made the following conjecture:
\begin{conj}[\cite{HR90}] \label{conj:AM}
Every simple connected graph other than $K_2$ is antimagic.
\end{conj}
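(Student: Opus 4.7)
Conjecture~\ref{conj:AM} is the long-standing Hartsfield--Ringel conjecture and is not actually settled in this paper; the results advertised in the abstract are partial progress toward it. Nevertheless, a plausible attack consistent with the paper's methods is to try to distil an ordinary antimagic labeling out of the weighted-list machinery. The paper's first theorem shows that every graph with no $K_1$ or $K_2$ component admits an antimagic labeling chosen from any list of size $|E(G)|+\lfloor 4n/3\rfloor$, with arbitrary vertex weights; the conjecture, by contrast, demands the single rigid list $\{1,\dots,|E(G)|\}$ and trivial weighting. The entire difficulty is to collapse this slack down to zero.

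My plan would be an induction on $|V(G)|+|E(G)|$, with base cases covered by the classical Hartsfield--Ringel results on paths, cycles, complete graphs, and wheels. The inductive step would hinge on finding a \emph{reducible configuration}: a small subgraph $H\subseteq G$ whose deletion yields a graph $G'$ still meeting the induction hypothesis, together with an extension lemma showing that any antimagic labeling of $G'$ extends to $G$ by choosing labels for $E(H)$ from the integers not yet used on $E(G')$. Natural candidates for $H$ include pendant stars, short pendant paths, leaves attached to high-degree vertices, and short ears of an ear decomposition. The extension step would be phrased as a Combinatorial Nullstellensatz computation on the polynomial $\prod_{u\neq v}\bigl(S(u)-S(v)\bigr)$ restricted to the variables indexing $E(H)$, where each vertex in $V(H)\cup N_G(H)$ carries a ``weight'' recording the partial sum already contributed by $G'$. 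In this way one would invoke the paper's weighted-list theorem on $H$ augmented with a small number of dummy edges that carry exactly the required amount of slack.

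The main obstacle is sparsity. When $G$ is close to $2$-regular -- long subdivisions, caterpillars with very long spines, trees with many leaves -- the number of separation constraints is on the order of $n^2$, whereas only $O(n)$ edge labels are available, so the Nullstellensatz room vanishes and no reducible configuration can be extracted purely locally. A secondary, more structural obstacle is the global identity $\sum_v S(v)=2\sum_e f(e)$, which is precisely what forces the $K_2$ exclusion and prevents purely vertex-local extension arguments; this identity is the ultimate reason that passing from $k=\lfloor 4n/3\rfloor$ to $k=0$ is not a matter of tightening the bookkeeping in the present paper but would require a genuinely new combinatorial idea.
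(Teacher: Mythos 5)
This statement is Conjecture~\ref{conj:AM} of Hartsfield and Ringel: the paper does not prove it, offers no argument for it, and only cites it as motivation, so there is no proof of the paper's to compare yours against. You correctly recognize this, and to your credit you do not claim a proof; but what you have written is a research plan, not a proof, and it should not be presented as even a near-proof of the statement. The concrete reason your sketch cannot be pushed through with the paper's machinery is the one you half-identify: every application of the Combinatorial Nullstellensatz in the paper needs, for each variable $x_i$, a label set of size exceeding the degree of $x_i$ in the relevant polynomial, and those degrees are forced to be at least on the order of $n$ (each vertex must be separated from linearly many others). This is exactly why the paper's guarantees come with surplus $\flr{\frac{4n}{3}}$ or $\flr{\frac{2n}{3}}$ and why the lists cannot be shrunk to the rigid set $\{1,\dotsc,|E(G)|\}$: with zero slack the sets $T_i$ in Theorem~\ref{CN} are too small, and the whole extension step collapses. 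Likewise, a reducible-configuration induction for genuine ($k=0$) antimagicness would need the minimal counterexample with $\Delta\leq 2$ to be handled exactly, whereas Lemma~\ref{lem:maxd2} handles paths and cycles only by spending extra labels; the classical Hartsfield--Ringel results for paths, cycles, complete graphs, and wheels do not serve as base cases for deletion-based induction, since deleting a reducible configuration from an arbitrary connected graph need not land in those classes (and may disconnect the graph, which is precisely why the paper introduces the quasi-antimagic notion).

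In short: your assessment that the conjecture remains open and that closing the gap from $k=\flr{\frac{4n}{3}}$ to $k=0$ requires a genuinely new idea is accurate, but the proposal contains no step that constitutes progress on the statement itself, and the specific plan (Nullstellensatz extension over leftover labels from $\{1,\dotsc,|E(G)|\}$) fails at the point where the available value sets become smaller than the variable degrees.
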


More than 25 years later, this conjecture remains open.
Progress has been made for various minimum, maximum, and average degree conditions and for regular graphs.
Alon et al. \cite{AKLRY04} proved that there is a constant $C$ such that every graph with $n$ vertices and minimum degree at least $C\log{n}$ is antimagic.
They also proved that a graph $G$ on $n$ vertices is antimagic if $n\geq4$ and $\Delta(G)\geq n-2$.
Yilma \cite{Y13} improved this bound from $n-2$ to $n-3$ for $n\geq9$.
Later, Eccles \cite{E15} proved the conjecture for graphs with average degree at least 4182.
Further developments have focused on regular graphs, Cranston et al. \cite{CLZ15} proved that $k$-regular graphs are antimagic, when $k$ is odd $k\geq3$.
This was followed by B\'{e}rczi et al. \cite{BBV15} and Chang et al. \cite{CLPZ16} proving that $k$-regular graphs are antimagic, when $k$ is even and $k\geq4$.
Following partial results of Deng and Li \cite{DL19} and Lozano et al. \cite{LMS19}, Lozano et al. \cite{LMST21} proved that all caterpillars are antimagic.

Due to the elusiveness of the original conjecture, several notions have been considered as either a measure of closeness to being antimagic or a variation thereof.
We direct the interested reader to Gallian's dynamic survey of graph labeling \cite{G13} for a more thorough discussion.

A graph $G$ is called \textit{$k$--antimagic} if there exists an injective edge labeling from $E(G)$ into the set \mbox{$\{1,\dotsc,|E(G)|+k\}$} such that vertex sums are pairwise distinct. 
Note that antimagic is equivalent to $0$--antimagic. 
If for any vertex weighting $\omega:V(G)\to\R$, there exists a bijective edge labeling $\phi: E(G) \to \{1,\dotsc,|E(G)|\}$ such that $\omega(u)+\sum_{ux\in E(G)}\phi(ux)\neq \omega(v)+\sum_{vx\in E(G)} \phi(vx)$ for all $u,v\in V(G)$, then $G$ is called \textit{weighted--antimagic}.
We call $\omega(v)+\sum_{vx\in E(G)} \phi(vx)$ the \textit{weighted vertex sum} at vertex $v$.
When a graph is described using a combination of variations in this paper, it satisfies the conditions of each variation mentioned in its description.
For example, a graph $G$ is called \textit{weighted--$k$--antimagic} if for any vertex weighting from $V(G)$ into $\R$, there exists an injective edge labeling from $E(G)$ into $\{1,\dotsc,|E(G)|+k\}$ such that weighted vertex sums are pairwise distinct. 

The argument used in the previously mentioned result of Alon et al. \cite{AKLRY04} extends to show that every graph $G$ with minimum degree at least $C\log|V(G)|$ is weighted--0--antimagic.
However, there are connected graphs that are not weighted--0--antimagic; for example, $K_{1,n}$.
Further, Wong and Zhu \cite{WZ12} provided a family of connected graphs with even number of vertices that is not weighted--1--antimagic.
In investigating the natural question of finding the smallest integer $k$ for which a graph is weighted--$k$--antimagic, Wong and Zhu posed the following questions: 
Is it true that every connected graph other than $K_2$ is weighted--$2$--antimagic? 
Is there a connected graph $G$ with an odd number of vertices which is not weighted--$1$--antimagic? 
Improving upon a result of Hefetz in \cite{H05} showing that every connected graph other than $K_2$ is weighted--$(2|V(G)|-4)$--antimagic, Wong and Zhu also proved the following: 
\begin{thm} [\cite{WZ12}] \label{thm:wAM32}
Every connected graph on $n$ vertices with $n\geq 3$ is weighted--$\left(\cel{\frac{3n}{2}}-2\right)$--antimagic.
\end{thm}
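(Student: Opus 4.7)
The plan is to apply the Combinatorial Nullstellensatz (CN) to a polynomial whose nonvanishing encodes the existence of a valid labeling. Let $m := |E(G)|$ and $k := \cel{3n/2} - 2$. Associate a variable $x_e$ to each edge $e \in E(G)$, and for $v \in V(G)$ write $S(v) := \omega(v) + \sum_{e \ni v} x_e$ for the weighted vertex sum. Consider
$$ P(\mathbf{x}) \;=\; \prod_{\{e,f\} \in \binom{E(G)}{2}} (x_e - x_f) \;\cdot\; \prod_{\{u,v\} \in \binom{V(G)}{2}} (S(u) - S(v)); $$
any $\mathbf{x} \in \{1,\ldots,m+k\}^m$ with $P(\mathbf{x}) \neq 0$ yields the desired injective labeling with all weighted vertex sums distinct.

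A direct appeal to CN on the full polynomial is too wasteful: for $e = \{u,v\}$, the variable $x_e$ has degree at most $m-1$ in the first product and at most $2(n-2)$ in the second (it cancels from the factor $S(u)-S(v)$ but appears in the $2(n-2)$ other factors touching $u$ or $v$), which only produces the weaker bound $k \geq 2n-4$. To sharpen this to $\cel{3n/2}-2$, I would fix the labels on $E(G)\setminus F$ for a carefully chosen spanning subgraph $F$, assigning the smallest $m-|F|$ labels to its complement, and retain only the $x_e$ with $e \in F$ as variables taking values in the top $|F|+k$ labels. A natural candidate is a spanning tree augmented by an appropriate (near-)matching, which has order roughly $\cel{3n/2}-2$. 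The label-distinctness factor then drops to per-variable degree $|F|-1$, and we may additionally discard any vertex-sum factor $(S(u)-S(v))$ whose value is already forced to be nonzero by the choice of fixed labels, so that each surviving factor that still involves $x_e$ contributes to a bounded portion of the variable's degree budget.

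The main obstacle, as is typical for such Nullstellensatz arguments, is verifying that an appropriate top-degree monomial $\prod_{e \in F} x_e^{a_e}$ of the restricted polynomial has nonzero coefficient. Such coefficients admit a signed combinatorial interpretation as a weighted count of orientations, Eulerian subgraphs, or matching-compatible configurations of $F$ consistent with the exponents $a_e$, and proving nonvanishing demands a parity/Eulerian argument tailored to the tree-plus-matching structure of $F$. I expect to need a case split on the parity of $n$ to absorb the ceiling in $\cel{3n/2}$, and to handle small base cases (short paths, small cycles, stars) separately by direct construction, since the structural lemma producing $F$ may degenerate there. If the coefficient computation proves unwieldy, a fallback is a greedy/inductive construction along a DFS ordering of a spanning tree rooted at an appropriate vertex, in which the bulk of the slack in $\cel{3n/2}-2$ is used to absorb the coupling of the root's vertex sum to every other vertex.
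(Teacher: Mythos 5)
This statement is quoted from Wong and Zhu \cite{WZ12}; the paper you are reading does not prove it, so the only fair comparison is with the paper's own stronger result (Theorem \ref{thm:lwAM43}, proved via Lemmas \ref{lem:maxd2}--\ref{lem:red3vtx}), which follows the same general philosophy you propose: fix most edge labels by a preliminary (greedy) stage, keep variables only on a structured set of edges, and close with the Combinatorial Nullstellensatz. Measured against that standard, your proposal has a genuine gap: the entire difficulty of such arguments is the step you explicitly defer, namely exhibiting a specific top-degree monomial $\prod_{e\in F}x_e^{a_e}$ of the restricted polynomial whose coefficient is provably nonzero, with every exponent $a_e$ below the list size $|F|+k$. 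Nothing in your sketch makes that step tractable: the subgraph $F$ is not pinned down (which spanning tree, which near-matching), the exponents are not specified, and the hoped-for ``parity/Eulerian'' interpretation of the coefficient is not carried out. The working proofs in this area succeed precisely because the variable edges are chosen to be a matching (or otherwise pairwise nonadjacent in the relevant sense), so that after discarding lower-order terms the polynomial collapses to a power of a Vandermonde product times a monomial, $\prod_{i<j}(x_i-x_j)^{2s+1}\prod_i x_i^{t}$, whose coefficient is given explicitly by Lemma \ref{lem:coef}; with variables on a spanning tree plus a matching the top-degree part has no such product structure, and no substitute coefficient computation is offered. As written, the proposal is a plausible research plan, not a proof.

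Two secondary points. First, once $F$ is spanning and connected, the suggestion that some vertex-sum factors $(S(u)-S(v))$ can be ``discarded as forced nonzero'' is essentially vacuous: every such factor still contains variables (the sets of $F$-edges at $u$ and at $v$ never coincide in a simple graph of minimum $F$-degree one with no $K_2$ component of $F$), so all $\binom{n}{2}$ factors survive and must be absorbed by the degree budget. Second, assigning the complement of $F$ ``the smallest $m-|F|$ labels'' in one fixed way forfeits the freedom that the analogous first stage in Lemma \ref{lem:maxd2} exploits (there the non-matching edges are labeled greedily precisely to separate the vertices that will not be touched by any variable, and to keep the subsequent polynomial's degree under control); if you keep the variable set small enough for the coefficient computation to go through, you will need that greedy stage, and its feasibility must be argued, not assumed.
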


The main result of this paper improves upon this result by lowering $\left(\cel{\frac{3n}{2}}-2\right)$, including disconnected graphs in the result, and proving the results for list-coloring.
To this end, a graph $G$ is \textit{$k$--list--antimagic} if for any list assignment $L\colon E(G)\to 2^{\R}$, where $|L(e)|\geq|E(G)|+k$ for all $e\in E(G)$, there exists an edge labeling that assigns each edge $e$ a label from $L(e)$ such that edge labels are pairwise distinct and vertex sums are pairwise distinct.
\begin{thm} \label{thm:lwAM43}
Every graph on $n$ vertices with no $K_1$ or $K_2$ component is weighted--$\flr{\frac{4n}{3}}$--list--antimagic.
\end{thm}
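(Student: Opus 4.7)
The plan is to prove Theorem~\ref{thm:lwAM43} by induction on $n$, combining the Combinatorial Nullstellensatz with a reducible-configuration argument. Given $G$ with $m=|E(G)|$, a vertex weighting $\omega\colon V(G)\to\R$, and lists $L(e)$ of size at least $m+\flr{4n/3}$, associate the graph polynomial
\[ P_G(\mathbf{x})=\prod_{\{e,e'\}\subseteq E(G)}(x_e-x_{e'})\cdot\prod_{\{u,v\}\subseteq V(G)}\bigl(\sigma_u(\mathbf{x})+\omega(u)-\sigma_v(\mathbf{x})-\omega(v)\bigr), \]
where $\sigma_u(\mathbf{x})=\sum_{e\ni u}x_e$. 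A nonvanishing assignment $\mathbf{x}\in\prod_e L(e)$ to $P_G$ is exactly the desired labeling, so by the Combinatorial Nullstellensatz it suffices to exhibit a monomial $\prod_e x_e^{d_e}$ in $P_G$ with nonzero coefficient and $d_e\le m+\flr{4n/3}-1$ for every $e\in E(G)$.

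For the inductive step, I would identify a finite family $\mathcal{H}$ of \emph{reducible configurations}: subgraphs $H\subseteq G$ such that $G-V(H)$ is strictly smaller, still has no $K_1$ or $K_2$ component, and such that every labeling produced by applying the inductive hypothesis to $G-V(H)$ (which is $\flr{4|V(G-V(H))|/3}$-list-weighted-antimagic) can be extended to a valid labeling of $G$. For each $H\in\mathcal{H}$ one freezes a labeling of $E(G-V(H))$, with a modified weighting $\omega'$ on $V(G-V(H))$ chosen to absorb the pending contributions of the edges between $V(H)$ and $V(G-V(H))$, and applies the Combinatorial Nullstellensatz to the polynomial in the labels of the edges incident to $V(H)$. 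For the arithmetic to close, the configurations in $\mathcal{H}$ must roughly satisfy $\flr{4|V(H)|/3}\ge (\text{number of free variables})$ up to lower-order terms; this is why the natural reducible configurations come in multiples of three vertices with at most four incident edges per triple (for example, short paths, triangles, or small low-degree substructures).

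The two most technical steps are as follows. First, verifying that the coefficient appearing in the Combinatorial Nullstellensatz is nonzero for each $H\in\mathcal{H}$: this typically reduces to evaluating a permanent or applying the Alon--Tarsi method to a carefully chosen specialization of $P_G$, and the correct specialization must be tailored to the algebraic structure of each configuration. Second, proving a structural lemma that every graph with no $K_1$ or $K_2$ component contains some member of $\mathcal{H}$; since the hypothesis is weak, this requires a degree- and case-analysis on the local structure of $G$. I expect the coefficient-nonvanishing verification to be the main obstacle, since choosing the correct monomial and specialization is delicate and must be handled individually for each configuration, while the structural exhaustion of $\mathcal{H}$ is likely to require the bulk of the casework.
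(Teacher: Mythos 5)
Your proposal is a plan rather than a proof: the two components you yourself flag as the technical heart --- specifying the family $\mathcal{H}$ together with a structural lemma that every graph with no $K_1$ or $K_2$ component contains a member of it, and verifying the nonvanishing coefficients --- are exactly what is missing, and they do not come for free. Worse, the framework you set up has an obstruction that the paper had to engineer around. If you delete a vertex set $V(H)$ and insist that $G-V(H)$ again have no $K_1$ or $K_2$ component, then graphs of maximum degree $2$ (paths and cycles, or disjoint unions of them) admit no small such configuration: removing any few vertices of a path or cycle leaves $K_1$ or $K_2$ pieces, so $\mathcal{H}$ would have to contain entire components of unbounded size, and the extension step becomes a Combinatorial Nullstellensatz application in unboundedly many variables whose relevant coefficient must be computed explicitly. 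The paper resolves this differently: it relaxes the target property to quasi--antimagic (isolated vertices and $K_2$ components are exempted from the distinctness requirement), which allows an induction on edges --- deleting three edges at a $3^+$--vertex, so that $n$, and hence the budget $\flr{\frac{4n}{3}}$, never shrinks --- and it treats the residual case $\Delta(G)\le 2$ by a global argument: label the edges outside a maximum matching iteratively, then apply the Nullstellensatz to the matching edges, where the needed top--degree coefficient is supplied by the known formula for coefficients of odd powers of the Vandermonde product (Lemma \ref{lem:coef}). Your sketch contains no substitute for either device.

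Two further concrete problems. First, your modified weighting $\omega'$ ``absorbing the pending contributions'' of the edges between $V(H)$ and $G-V(H)$ is not well defined: those contributions are precisely the labels you have not yet chosen, so distinctness of weighted sums in $G-V(H)$ under any fixed $\omega'$ guarantees nothing once the cross edges are labeled; the correct fix, used in the paper's reducibility lemma, is to re-impose distinctness for every affected vertex as factors of the polynomial to which the Nullstellensatz is applied. Second, your reduction to ``a monomial with nonzero coefficient and $d_e\le m+\flr{\frac{4n}{3}}-1$'' omits the hypothesis of Theorem \ref{CN} that the monomial have total degree equal to the degree of the polynomial; a nonzero coefficient on a lower--degree monomial yields nothing.
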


We prove Theorem~\ref{thm:lwAM43}, our main result, in Section~\ref{sec:undir}.
With minor modifications, the proof can be used for antimagic labelings of oriented graphs, a variant introduced by Hefetz, M\"utze, and Schwartz \cite{HMS10}.
Oriented graphs are briefly discussed in Section~\ref{sec:orient}. 

%

Before proving our results in Section~\ref{sec:undir}, we present some useful tools.
The primary tool used in the results is the Combinatorial Nullstellensatz.
\begin{thm}[Combinatorial Nullstellensatz, \cite{A99}] \label{CN}
Let $\mathbb{F}$ be an arbitrary field, and let \linebreak $f=f(x_1,\ldots, x_n)$ be a polynomial in $\mathbb{F}[x_1,\ldots,x_n]$. Suppose the degree $d(f)$ of $f$ is $\sum_{i=1}^n t_i$, where each $t_i$ is a nonnegative integer, and suppose the coefficient of $\prod_{i=1}^n x_i^{t_i}$ in $f$ is nonzero. Then, if $S_1,\ldots,S_n$ are subsets of $\mathbb{F}$ with $|S_i|>t_i$, there are $s_1\in S_1$, $s_2\in S_2$, $\ldots$, $s_n\in S_n$ so that $f(s_1,\ldots,s_n)\neq 0$.
\end{thm}

A useful lemma when applying the Combinatorial Nullstellensatz is Equation (5.16) from \cite{FGIL94}, which is stated below.
\begin{lem}[\cite{FGIL94}] \label{lem:coef}
The coefficient of the monomial
$\displaystyle\prod_{1\leq i\leq N} x_i^{s(N-1)+i-1}$ 
in the polynomial \\
$\displaystyle\prod_{1\leq i<j\leq N} (x_i-x_j)^{2s+1}$
has absolute value $\frac{\big((s+1)N\big)!}{N!(s+1)!^N}$.
\end{lem}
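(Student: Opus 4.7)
The plan is to deduce the coefficient formula from Dyson's constant-term identity (proven by Gunson, Wilson, and Good) by exploiting the antisymmetry of $\Delta^{2s+1}$, where $\Delta = \prod_{1\leq i<j\leq N}(x_i - x_j)$ is the Vandermonde polynomial.

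First, I would compute the coefficient of the balanced monomial $x^\beta$, with $\beta_i = (s+1)(N-1)$ for every $i$, in the \emph{symmetric} polynomial $\Delta^{2(s+1)}$. Splitting the product $\prod_{i\neq j}(x_i - x_j)^{s+1}$ over ordered pairs shows that $\Delta^{2(s+1)} = \pm\prod_{i\neq j}(x_i - x_j)^{s+1}$, and factoring each $x_i$ from the binomials $(x_i - x_j)$ converts the required coefficient into the constant term
\[
\mathrm{CT}\prod_{i\neq j}\left(1 - \frac{x_j}{x_i}\right)^{s+1} = \frac{((s+1)N)!}{((s+1)!)^N},
\]
to which Dyson's identity applies directly with all parameters equal to $s+1$.

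Second, I would relate this coefficient to $[x^\alpha]\Delta^{2s+1}$, where $\alpha_i = s(N-1) + i - 1$, via the factorization $\Delta^{2(s+1)} = \Delta\cdot\Delta^{2s+1}$. Expanding $\Delta = \sum_{\sigma\in S_N}\mathrm{sgn}(\sigma)\, x^{\sigma(\tilde\rho)}$ with $\tilde\rho = (N-1, N-2, \ldots, 0)$ yields
\[
[x^\beta]\Delta^{2(s+1)} = \sum_{\sigma\in S_N}\mathrm{sgn}(\sigma)\,[x^{\beta - \sigma(\tilde\rho)}]\Delta^{2s+1}.
\]
A short calculation shows that $\beta - \sigma(\tilde\rho)$ is the vector obtained by permuting the entries of $\alpha$ according to $\sigma$. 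Antisymmetry of $\Delta^{2s+1}$ (from the odd exponent) contributes another factor of $\mathrm{sgn}(\sigma)$ when $[x^{\beta - \sigma(\tilde\rho)}]\Delta^{2s+1}$ is re-expressed in terms of $[x^\alpha]\Delta^{2s+1}$, and the two signs combine to $+1$ in every summand. The sum collapses to $N!\cdot[x^\alpha]\Delta^{2s+1}$, and combining with the previous step gives
\[
|[x^\alpha]\Delta^{2s+1}| = \frac{1}{N!}\cdot\frac{((s+1)N)!}{((s+1)!)^N} = \frac{((s+1)N)!}{N!\,((s+1)!)^N}.
\]

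The main obstacle is Dyson's constant-term identity itself; once it is granted, the remaining work—splitting products over ordered pairs, tracking signs from the Vandermonde expansion, and verifying that $\beta - \sigma(\tilde\rho)$ is indeed a permutation of $\alpha$—is routine algebraic bookkeeping. Since Dyson's conjecture admits several short independent proofs (Gunson--Wilson analytically, Zeilberger--Bressoud combinatorially), it would be cited rather than reproved here.
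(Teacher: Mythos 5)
Your derivation is correct. The reduction of the balanced coefficient of $\Delta^{2(s+1)}$ to the constant term of $\prod_{i\neq j}(1-x_j/x_i)^{s+1}$ is exactly the equal-parameter case of Dyson's identity, giving $\frac{((s+1)N)!}{((s+1)!)^N}$ up to the harmless sign $(-1)^{(s+1)\binom{N}{2}}$; and the collapsing step works: writing $\Delta^{2(s+1)}=\Delta\cdot\Delta^{2s+1}$, the exponent vector $\beta-\sigma(\tilde\rho)$ has $i$th entry $(s+1)(N-1)-(N-\sigma(i))=s(N-1)+\sigma(i)-1=\alpha_{\sigma(i)}$, and antisymmetry of the odd power $\Delta^{2s+1}$ turns each summand into $\mathrm{sgn}(\sigma)^2\,[x^\alpha]\Delta^{2s+1}$, so the sum is $N!\,[x^\alpha]\Delta^{2s+1}$ (a quick sanity check with $N=2$, $s=1$ gives coefficient $3$ of $x_1x_2^2$ in $(x_1-x_2)^3$, matching $\frac{4!}{2!\,(2!)^2}$). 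Note, though, that the paper does not prove this lemma at all: it is quoted as Equation (5.16) of Di Francesco, Gaudin, Itzykson, and Lesage, with only the remark that the polynomial is a power of the Vandermonde determinant. So your contribution is a self-contained proof modulo one classical black box (Dyson's constant-term identity, due to Gunson, Wilson, and Good), whereas the paper buys the statement wholesale from the physics literature; your route makes transparent why the $N!$ appears (averaging the antisymmetric coefficient over the $N!$ permuted staircase exponents) and why the multinomial $\frac{((s+1)N)!}{((s+1)!)^N}$ appears (equal-parameter Dyson), at the cost of citing a nontrivial identity that you correctly decline to reprove. Since the lemma only asserts the absolute value, your loose handling of the overall sign is fine.
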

\noindent Note that the polynomial in the above lemma is the determinant of the $(2s+1)^{st}$ power of the Vandermonde matrix.

\section{Antimagic Results} \label{sec:undir}
The main results of this paper rely on an inductive argument that has the potential to create isolated vertices or $K_2$ components.
Since the creation of these components would preclude an antimagic labeling, we define the following to account for this possibility.
A graph $G$ is \textit{$k$--quasi--antimagic} if there exists an injective edge labeling from $E(G)$ into $\{1,\dotsc,|E(G)|+k\}$ such that vertex sums are pairwise distinct for pairs of non--isolated vertices that are not adjacent in a $K_2$ component.
Notice that if every component of a graph has at least 3 vertices, $k$--quasi--antimagic is equivalent to $k$--antimagic.

Throughout the proof  we denote a vertex of degree at least $j$ in a graph $G$ by a \textit{$j^+$--vertex}.
An \textit{even (odd) component} in a graph is a component that has an even (odd) number of vertices.
A vertex $v$ is in edge $e$, denoted $v\in e$, if $e$ is incident to $v$.
We use notation from \cite{W96} unless otherwise specified.

The following lemma provides the basis step of our inductive argument:

\begin{lem} \label{lem:maxd2}
If $G$ is a graph on $n$ vertices and $\Delta(G) \leq 2$,
then $G$ is weighted--$\flr{\frac{4n}{3}}$--list--quasi--antimagic.
\end{lem}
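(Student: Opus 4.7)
The plan is to apply the Combinatorial Nullstellensatz (Theorem~\ref{CN}). Enumerate $E(G)=\{e_1,\dots,e_m\}$ and let $\mathcal{P}$ denote the set of unordered vertex pairs $\{u,v\}$ at which distinct weighted sums are required, namely pairs of non-isolated vertices not lying in a common $K_2$ component. Given the weighting $\omega$ and lists $L(e_i)$, form
\[P(\mathbf{x})=\prod_{1\le i<j\le m}(x_i-x_j)\prod_{\{u,v\}\in\mathcal{P}}\bigl(s_u(\mathbf{x})-s_v(\mathbf{x})+\omega(u)-\omega(v)\bigr),\]
where $s_v(\mathbf{x})=\sum_{e_i\ni v}x_i$. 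Any zero-avoiding substitution of $P$ from $\prod_i L(e_i)$ certifies quasi-antimagicity, since the first factor enforces distinct edge labels and the second factor distinct weighted vertex sums at every pair in $\mathcal{P}$.

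Because $\Delta(G)\le 2$, every component of $G$ is an isolated vertex, a path, or a cycle, so $m\le n$ and $|\mathcal{P}|\le\binom{n}{2}$. The polynomial $P$ has total degree $\binom{m}{2}+|\mathcal{P}|$, and by Theorem~\ref{CN} it suffices to exhibit a top-degree monomial $\prod_i x_i^{d_i}$ of $P$ whose coefficient is nonzero and whose exponents satisfy $d_i\le m+\flr{\frac{4n}{3}}-1$ for every $i$; each list then supplies the required $d_i+1$ admissible values. I would try a nearly uniform monomial, placing on each $x_i$ a degree close to the average $(\binom{m}{2}+|\mathcal{P}|)/m\le (m-1)/2+(n-1)/2$, which sits comfortably under the permitted bound.

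The crux of the argument is to verify that such a coefficient does not vanish. Within a single path or cycle component, each form $s_u-s_v$ is a short signed sum of edge variables of that component, so the pair-factor product restricted to one component is highly symmetric. For example, for the triangle $C_3$ the three pair factors collapse (up to sign) to $\prod_{1\le i<j\le 3}(x_i-x_j)$, so that the corresponding piece of $P$ becomes a signed square of the Vandermonde. In general, the plan is to use such within-component factorizations to arrange $P$ as a product of odd Vandermonde powers on disjoint blocks of edge variables---one block per component---and then invoke Lemma~\ref{lem:coef} blockwise to identify a nonzero leading-monomial coefficient.

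The main obstacle will be the pair factors $s_u-s_v$ whose endpoints lie in different components, since these mix variables across blocks and do not conform to the block structure. I would handle them via a monomial-order / triangularity argument: fix a monomial order refining the intended block degrees and show that each cross-component factor has a unique way to contribute to the chosen leading monomial---for instance, by always selecting its constant $\omega(u)-\omega(v)$ part rather than its variable part---so that the nonzero block contributions from Lemma~\ref{lem:coef} propagate to a nonzero global coefficient of $P$. Confirming this isolation of cross-component contributions in the extreme cases (such as a graph that is a disjoint union of triangles, where the bound $\flr{\frac{4n}{3}}$ is essentially tight) is where I expect the technical work to concentrate.
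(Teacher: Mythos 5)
Your single application of the Combinatorial Nullstellensatz to all $m$ edge variables runs into a coefficient problem that the outline does not resolve, and the proposed resolution is unsound. Theorem~\ref{CN} requires a monomial whose degree equals the total degree of $P$; the top-degree homogeneous part of $P$ is the product of the leading linear forms of \emph{all} factors, namely $\prod_{i<j}(x_i-x_j)\cdot\prod_{\{u,v\}\in\mathcal{P}}\bigl(s_u(\mathbf{x})-s_v(\mathbf{x})\bigr)$. Consequently you cannot ``select the constant part $\omega(u)-\omega(v)$'' of the cross-component factors: any monomial in which some factor contributes its constant has degree strictly less than $\deg P$ and is unusable in Theorem~\ref{CN} as stated, so the triangularity device does not isolate the blocks. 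Since cross-component pairs make up the bulk of $\mathcal{P}$ (order $n^2$ of them), their leading forms dominate the top-degree part, and you are left with the genuinely hard task of exhibiting a top-degree monomial with nonzero coefficient all of whose exponents stay below the list size; nothing in the proposal accomplishes this, and ``a nearly uniform monomial'' is not guaranteed to have nonzero coefficient. The claimed within-component collapse to Vandermonde factors also fails beyond the triangle: for a path on four vertices with edges $x_1,x_2,x_3$ the pair factors include $x_2$ and $x_1-x_2-x_3$, which is not a Vandermonde structure, and even for the triangle you obtain the \emph{square} of the Vandermonde, an even power to which Lemma~\ref{lem:coef} (stated for odd powers $2s+1$) does not apply. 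A smaller slip: when $m<n$ (unions of paths) the quantity $|\mathcal{P}|/m$ can exceed $(n-1)/2$, so the average-degree estimate is also off.

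The paper sidesteps all of this with a two-stage argument: it first labels the edges outside a maximum matching greedily, controlling only the sums at the unmatched vertices $v_1,\dotsc,v_s$ and at neighbours, and then applies the Nullstellensatz only to the $k$ matching-edge variables. Because each matching edge is the unique variable edge at its two endpoints, the leading form there is exactly $\prod_{i<j}(x_i-x_j)^5\prod_i x_i^{2s+m-k}$, so Lemma~\ref{lem:coef} yields a nonzero top-degree coefficient and the exponent bound $m+n+s-3<m+\flr{\frac{4n}{3}}$ follows. If you wish to keep a one-shot Nullstellensatz over all edges, you would need a replacement for Lemma~\ref{lem:coef} that evaluates a suitable coefficient of the full product of leading forms, including the cross-component ones; that is precisely the missing ingredient.
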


\begin{proof}
It suffices to prove the lemma for graphs with $\delta(G) \geq 1$, since adding isolated vertices increases $n$ without adding any additional labeling requirements.

Let $G$ have $m$ edges.
Given $1 \leq \delta(G) \leq \Delta(G) \leq 2$, every component of $G$ is a path or cycle and has at least $2$ vertices.
Let $e_1, \ldots, e_q$ be the $q$ isolated edges of $G$, $D_1, \ldots, D_r$ be the $r$ even components of $G$ each having at least $4$ vertices, and $C_1, \ldots, C_s$ be the odd components of $G$.
Let $\omega\colon V(G)\to \R$ be a vertex weighting and $L\colon E(G)\to2^{\R}$ be a list function such that $|L(e)| \geq m+\flr{\frac{4n}{3}}$ for all $e\in E(G)$.

Let $E'=\{e_1,\ldots, e_k\}$ be a matching in $G$ of maximum size.
Notice that $e_1,\ldots,e_q$ are in $E'$, so $k\geq q$.
In fact, counting gives $k = \frac{n-s}{2}$.
Also define $v_i$ for each $i \in \{1,2,\ldots,s\}$ to be the unique vertex in $C_i$ such that $v_i$ is not incident to any edge in $E'$.
Let $E''=E(G)-E'$.
Note that $|E''| = m-\frac{n - s}{2}$.

\begin{figure}[!htb]
\centering
\includegraphics[width=\textwidth]{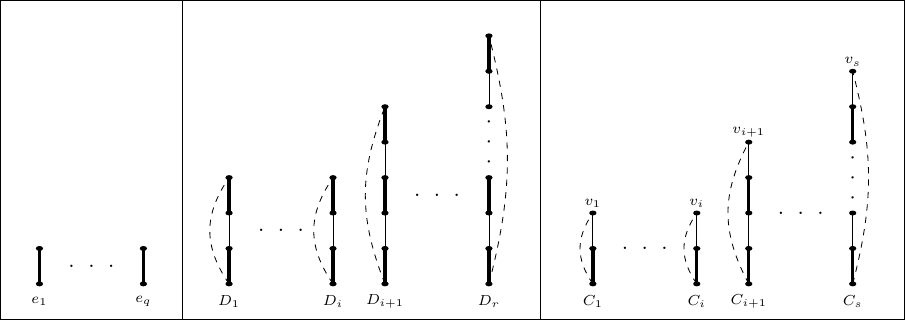}
\caption{Components (paths or cycles) of $G$ with edges in a maximum matching $E'$  in bold.}
\label{fig:E}
\end{figure}

In the first stage of this proof, we iteratively label the edges of $E''$ as follows.
For edge $e=yz \in E''$, we label $e$ from $L(e)$ so that
(1) the label assigned to $e$ is not already assigned to an edge,
(2) the weighted vertex sum of $u\in\{y,z\}$ is not equal to the weighted vertex sum of $w\in N(u)\setminus\{y,z\}$, and
(3) if $e$ is incident to some $v_i$, the weighted vertex sum at $v_i$ is distinct from the weighted vertex sum at $v_j$ for each $j \neq i$.
With these three restrictions, there are at most $(|E''| -1) + 2 + (s - 1)$ values that are not allowed when labeling each edge in $E''$.
Since $s \leq \frac{n}{3}$, we have
\[ |E''| + s = m-\frac{n- s}{2} + s = m+ \frac{3s}{2} -\frac{n}{2} \leq m <m+\flr{\frac{4n}{3}}.\]
Since $|L(e)|\geq m+\flr{\frac{4n}{3}}$ for each $e\in E''$, there are more labels on each edge than possible restrictions.
Therefore, such a labeling on $E''$ is possible.

The second stage of this proof is to label the edges of the maximum matching $E'$ in $G$.
Let $f''\colon E'' \rightarrow \R$ be the partial edge labeling described above and define $\omega''\colon V(G)\to \R$ to be $\omega''(v)=\omega(v)+\sum_{vx\in E''} f''(vx)$.
Note that $\omega''(v_1),\ldots,\omega''(v_s)$ are distinct and are not impacted by labeling edges in $E'$.
Also, if $uv\in E'$ is not an isolated edge, then $\omega''(u)\neq\omega''(v)$ because of the labeling of $E''$.
We construct a polynomial with variables $x_1,\ldots,x_k$ corresponding to the label of $e_1,\ldots,e_k$, respectively.
Equal edge labels and equal vertex sums appear in $G$ precisely at zeroes of the polynomial
\begin{align*}
g(x_1, \ldots, x_k) &= \prod_{1\leq i < j \leq k} \phi(i,j) \times \prod_{1\leq i\leq k} \psi(i), \mbox{ where}\\
\phi(i,j)&=(x_i-x_j)\prod_{\substack{u \in e_i\\u' \in e_j}}(x_i+\omega''(u)-x_j-\omega''(u')), \mbox{ and}\\
\psi(i)&=\prod_{e \in E''} (x_i - f''(e)) \prod_{1\leq j \leq s}\prod_{u \in e_i} (x_i + \omega''(u) - \omega''(v_j)).
\end{align*}
One can check that, for $1\leq i<j\leq k$, $\phi(i,j)=0$ if and only if either $e_i$ and $e_j$ have been given the same labels or the final vertex sum of an endpoint of $e_i$ matches the final vertex sum of an endpoint of $e_j$.
Also, for $1\leq i\leq k$, $\psi(i)=0$ if and only if the label $x_i$ is already used in $E''$ or one endpoint of $e_i$ has the same final vertex sum as $v_j$ for any $j\in\{1,2,\ldots,s\}$.
Note that the maximum degree in $g$ is at most $5{k \choose 2} + k(2s + m - k)$; we will show this is the maximum degree in $g$ below. 

The monomials of $g$ with maximum degree have the same coefficients as they do in polynomial
\[ h(x_1, \ldots, x_k) = \prod_{1\leq i < j \leq k}(x_i-x_j)^5 \prod_{1\leq i\leq k} x_i^{2s + m - k}.\]
By Lemma \ref{lem:coef}, the monomial 
\[x_1^{2(k-1) + (2s+m-k)}x_2^{2(k-1) + 1 + (2s+m-k)}x_3^{2(k-1) + 2 + (2s+m-k)}\dotsm x_k^{3(k-1) + (2s+m-k)}\]
has nonzero coefficient in $h$ and thus also in $g$.
Recall that $k = \frac{n-s}{2}$ and $s \leq \frac{n}{3}$. 
Hence
\begin{align*}
3(k-1) + (2s+m-k) &= m + n + s - 3\\
 	&\leq m + n + \flr{\frac{n}{3}} - 3\\
	&< m + \flr{\frac{4n}{3}}\!\!.
\end{align*}
Since $|L(e_i)|\geq m+\flr{\frac{4n}{3}}$ for all $e_i$, Theorem \ref{CN} implies that labels $x_1,\ldots,x_k$ can be chosen so that $g(x_1,\ldots,x_k)\neq0$.
By the construction of $g$, this implies that $G$ has a weighted--$\flr{\frac{4n}{3}}$--list--quasi--antimagic labeling.
\end{proof}

\begin{lem} \label{lem:red3vtx}
Let $G$ be an $n$-vertex graph that is not weighted--$\flr{\frac{4n}{3}}$--list--quasi--antimagic. Suppose that $G$ has the fewest edges of any graph with this property.  Then $\Delta(G)<3$.
\end{lem}

\begin{proof}
Let $G$ be an edge--minimal graph on $m$ edges with $\Delta(G)\geq 3$ that is not weighted--$\flr{\frac{4n}{3}}$--list--quasi--antimagic. Let $\omega\colon V(G)\to\R$ and $L\colon E(G)\to 2^{\R}$ such that $|L(e)|\geq m+\flr{\frac{4n}{3}}$ for all $e\in E(G)$.

Suppose that $v$ is a $3^+$--vertex with neighbors $u_1$, $u_2$, and $u_3$. 
Let $G'=G-\{vu_1,vu_2,vu_3\}$. By the choice of $G$, $G'$ is weighted--$\flr{\frac{4n}{3}}$--list--quasi--antimagic. Thus there is a labeling $f$ of $E(G')$ using labels in the lists of its edges that is a weighted--$\flr{\frac{4n}{3}}$--list--quasi--antimagic labeling of $G'$.
We apply the Combinatorial Nullstellensatz to extend $f$ to an edge labeling of $G$ which is weighted--$\flr{\frac{4n}{3}}$--list--quasi--antimagic.

Let $x_1$,  $x_2$, and $x_3$ correspond to the labels of edges $vu_1$, $vu_2$, and $vu_3$, respectively. 
For each $w\in V(G')$, let $\omega''(w)$ denote the weighted vertex sum of $w$ in $G'$. 
We define the following polynomial in which respective factors ensure a distinct edge labeling for the edges $vu_1,vu_2,vu_3$, distinct weighted sums for any pair between $V(G)-\{v,u_1,u_2,u_3\}$ and $\{v,u_1,u_2,u_3\}$, any pair between $v$ and $\{u_1,u_2,u_3\}$, and any pair in $\{u_1,u_2,u_3\}$: 
\begin{align*}
g(x_1,x_2,x_3) =& \prod_{1\leq i<j\leq3} (x_i-x_j) 
				\prod_{w\notin \{v,u_1,u_2,u_3\}} (\omega''(v)+x_1+x_2+x_3-\omega''(w))\\
			&\times \prod_{i=1}^3 \prod_{w\notin \{v,u_1,u_2,u_3\}} (x_i+\omega''(u_i)-\omega''(w))\\
			&\times \prod_{i=1}^3 (\omega''(v)+x_1+x_2+x_3-x_i-\omega''(u_i)) \\
			& \times \prod_{1\leq i <j\leq 3} (x_i+\omega''(u_i)-x_j-\omega''(u_j)).
\end{align*}
By construction, $g(x_1,x_2,x_3)=0$ when $x_i\in L(vu_i)-\{f(e)\colon e\in E(G')\}$ if and only if labels chosen for $x_1$, $x_2$, and $x_3$ do not create a weighted--$\flr{\frac{4n}{3}}$--list--quasi--antimagic labeling.
Note that \[\dg(g)= {3 \choose 2}+(n-4)+3(n-4)+3+{3\choose2}=4n-7.\]
Therefore the coefficient of any monomial $x_1^ax_2^bx_3^c$ in $g$, where $a+b+c=4n-7$, is the same as its coefficient in the polynomial
\begin{align*}
g'(x_1,x_2,x_3)=x_1^{n-4}x_2^{n-4}x_3^{n-4}(x_1+x_2+x_3)^{n-4}
		\prod_{1\leq i<j\leq3}(x_i-x_j)^2(x_i+x_j).
\end{align*}
To use Theorem \ref{CN}, we would like a fairly balanced triple $(a,b,c)$ with $a+b+c=4n-7$ such that the coefficient of $x_1^ax_2^bx_3^c$ in $g'$ (and thus in $g$) is nonzero.  It suffices to find $(a',b',c')$ with $a'+b'+c'=n+5$ so that the coefficient of $x_1^{a'}x_2^{b'}x_3^{c'}$ is nonzero in the polynomial
\begin{align*}
h(x_1,x_2,x_3)=(x_1+x_2+x_3)^{n-4}
		\prod_{1\leq i<j\leq3}(x_i-x_j)^2(x_i+x_j).
\end{align*}
For some $i\in\{0,1,2\}$, we can write $n+5=3k+i$.  We use the notation $\left[x_1^{a'}x_2^{b'}x_3^{c'}\right]_h$ to refer to the coefficient of $x_1^{a'}x_2^{b'}x_3^{c'}$ in the polynomial $h$.  Defining $a'=k+i+1$, $b'=k$, and $c'=k-1$,
\begin{align*}
\left[x_1^{a'}x_2^{b'}x_3^{c'}\right]_h=\sum_{\alpha+\beta+\gamma=9}{n-4 \choose a'-\alpha,b'-\beta,c'-\gamma}\left[x_1^\alpha x_2^\beta x_3^\gamma\right]_{\hat{h}},
\end{align*}
where $\hat{h}$ is the polynomial
\begin{align*}
\hat{h}(x_1,x_2,x_3)=\prod_{1\leq i< j\leq3}(x_i-x_j)^2(x_i+x_j).
\end{align*}

We verify that this coefficient is nonzero for $n\geq3$ in Appendix \ref{APPEND}.  
Therefore the corresponding coefficient $\left[x_1^{a}x_2^{b}x_3^{c}\right]_g$ is also nonzero where $\max\{a,b,c\}=a=a'+(n-4)=\frac{4n-4+2i}{3}\leq\flr{\frac{4n}{3}}$.

Define $L'(vu_i)=L(vu_i)-\{f(e)\colon e\in E(G')\}$.
Since $|L(vu_i)|\geq m+\flr{\frac{4n}{3}}$, we have $|L'(vu_i)|\geq\flr{\frac{4n}{3}}+3$.
Thus, by Theorem \ref{CN}, there are labels $f(vu_1)$, $f(vu_2)$, and $f(vu_3)$ in $L'(vu_1)$, $L'(vu_2)$, and $L'(vu_3)$, respectively, for which $g(f(vu_1),f(vu_2),f(vu_3))$ is nonzero.
Therefore we obtain a weighted--$\flr{\frac{4n}{3}}$--list--quasi--antimagic labeling of $G$, contradicting the choice of $G$.
\end{proof}

Theorem \ref{thm:lwAM43} follows from the following result, which is a direct result of the contradiction between Lemmas \ref{lem:maxd2} and \ref{lem:red3vtx}.

\begin{thm}\label{thm:lwqAM}
Every graph on $n$ vertices is weighted--$\flr{\frac{4n}{3}}$--list--quasi--antimagic.
\end{thm}


\noindent \textbf{Remark:} 
A generalized version of Lemma \ref{lem:red3vtx} might claim that $\Delta(G)<d$ when $G$ is not weighted--$\flr{\frac{(d+1)n}{d}}$--list--quasi--antimagic. 
A modification to the computation in Appendix~\ref{APPEND} confirms that this general form holds for $d=4,5$.
However, the technique of Lemma \ref{lem:maxd2} does not extend beyond $\Delta(G)\leq2$.
As such, improving Theorem \ref{thm:lwqAM} is left as an area for future investigation.

\section{Remarks on Oriented Graphs} \label{sec:orient}
An oriented graph $G$ is \textit{oriented--antimagic} if there exists a bijective edge labeling from $E(G)$ to \linebreak $\{1,\dotsc,|E(G)|\}$ such that oriented vertex sums are pairwise distinct, where an \textit{oriented vertex sum} at a vertex $v$ is the sum of labels of all edges entering $v$ minus the sum of labels of all edges leaving $v$. 
An \textit{orientation} of $G$ is a directed graph with $G$ as its underlying graph.

Hefetz, M\"utze, and Schwartz \cite{HMS10} proved that there is a constant $C$ such that every orientation of a graph on $n$ vertices with minimum degree at least $C\log n$ is oriented--antimagic. 
They also showed that every orientation of complete graphs, wheels, stars with at least 4 vertices, and regular graphs of odd degree are oriented--antimagic. 
In addition, they showed that every regular graph on $n$ vertices with even degree and a matching of size $\flr{\frac{n}{2}}$ has an orientation that is oriented--antimagic.
They made the following conjecture and asked the subsequent question:
\begin{conj}[\cite{HMS10}] \label{conj:oAM}
Every connected undirected graph admits an orientation that is oriented--antimagic.
\end{conj} 
\begin{ques}[\cite{HMS10}] \label{ques:oAM}
Is every connected oriented graph on at least 4 vertices oriented--antimagic?
\end{ques}
\noindent Recently, a variety of papers  proved that various graph classes admit an antimagic orientation, see \linebreak \cite{LSWY19,LMS19,SY17,SH19,SYZ21,Y19}.  

Our approach to proving Theorem \ref{thm:lwAM43} can be modified slightly for showing that graphs admit a $k$--antimagic orientation.
An oriented edge labeled with a non-zero value contributes differently to the vertex sums of its two incident vertices. 
Thus, an exception is no longer necessary for isolated edges.  
That is, for weighted graphs, the only difference between quasi--antimagic and antimagic is that the former allows multiple isolated vertices.
Moreover, 
the freedom to choose the orientation of every edge doubles the effectiveness of our use of Combinatorial Nullstellensatz in making progress toward Conjecture~\ref{conj:oAM}. 
Essentially, the ability to change sign means that we can have $\flr{\frac{4n}{3}}$ elements in the sets $T_i$ required by Theorem \ref{CN} by only including $\flr{\frac{2n}{3}}$ extra values.
Indeed, the natural 
modifications to the polynomials in the proofs of Lemmas~\ref{lem:maxd2} and~\ref{lem:red3vtx} give that every graph on $n$ vertices (with at most one isolated vertex) admits an orientation that is $\flr{\frac{2n}{3}}$--oriented--antimagic. 

\section{Acknowledgements}
The authors would like to thank the organizers of the 2014 Rocky Mountain--Great Plains Graduate Research Workshop in Combinatorics without whom this collaboration would not have been possible.
They would also like to acknowledge Nathan Graber, Kirsten Hogenson, and Lauren M. Nelsen, workshop participants who contributed in the initial exploration of this problem.

\nocite{*}
\bibliographystyle{alpha}

\begin{thebibliography}{DFGIL94}

\bibitem[AKL{\etalchar{+}}04]{AKLRY04}
N.~Alon, G.~Kaplan, A.~Lev, Y.~Roditty, and R.~Yuster.
\newblock Dense graphs are antimagic.
\newblock {\em J. Graph Theory}, 47(4):297--309, 2004.

\bibitem[Alo99]{A99}
Noga Alon.
\newblock Combinatorial {N}ullstellensatz.
\newblock {\em Combin. Probab. Comput.}, 8(1-2):7--29, 1999.
\newblock Recent trends in combinatorics (M\'atrah\'aza, 1995).

\bibitem[BBV15]{BBV15}
Krist\'of B\'erczi, Attila Bern\'ath, and M\'at\'e Vizer.
\newblock Regular graphs are antimagic.
\newblock {\em Electron. J. Combin.}, 22(3):Paper 3.34, 6, 2015.

\bibitem[CLPZ16]{CLPZ16}
Feihuang Chang, Yu-Chang Liang, Zhishi Pan, and Xuding Zhu.
\newblock Antimagic labeling of regular graphs.
\newblock {\em J. Graph Theory}, 82(4):339--349, 2016.

\bibitem[CLZ15]{CLZ15}
Daniel~W. Cranston, Yu-Chang Liang, and Xuding Zhu.
\newblock Regular graphs of odd degree are antimagic.
\newblock {\em J. Graph Theory}, 80(1):28--33, 2015.

\bibitem[DFGIL94]{FGIL94}
P.~Di~Francesco, M.~Gaudin, C.~Itzykson, and F.~Lesage.
\newblock Laughlin's wave functions, coulomb gases and expansions of the
  discriminant.
\newblock {\em Int. J. Mod. Phys. A}, 9:4257--4351, 1994.

\bibitem[DL19]{DL19}
Kecai Deng and Yunfei Li.
\newblock Caterpillars with maximum degree 3 are antimagic.
\newblock {\em Discrete Math.}, 342(6):1799--1801, 2019.

\bibitem[Ecc16]{E15}
Tom Eccles.
\newblock Graphs of large linear size are antimagic.
\newblock {\em J. Graph Theory}, 81(3):236--261, 2016.

\bibitem[Gal98]{G13}
Joseph~A. Gallian.
\newblock A dynamic survey of graph labeling.
\newblock {\em Electron. J. Combin.}, 5:Dynamic Survey 6, 43 pp.\, 1998.

\bibitem[Hef05]{H05}
Dan Hefetz.
\newblock Anti-magic graphs via the combinatorial nullstellensatz.
\newblock {\em J. Graph Theory}, 50(4):263--272, 2005.

\bibitem[HMS10]{HMS10}
Dan Hefetz, Torsten M\"utze, and Justus Schwartz.
\newblock On antimagic directed graphs.
\newblock {\em J. Graph Theory}, 64(3):219--232, 2010.

\bibitem[HR90]{HR90}
Nora Hartsfield and Gerhard Ringel.
\newblock {\em Pearls in graph theory}.
\newblock Academic Press, Inc., Boston, MA, 1990.
\newblock A comprehensive introduction.

\bibitem[LMS19]{LMS19}
Antoni Lozano, Merc\`e Mora, and Carlos Seara.
\newblock Antimagic labelings of caterpillars.
\newblock {\em Appl. Math. Comput.}, 347:734--740, 2019.

\bibitem[LMST21]{LMST21}
Antoni Lozano, Merc\`e Mora, Carlos Seara, and Joaqu\'{\i}n Tey.
\newblock Caterpillars are antimagic.
\newblock {\em Mediterr. J. Math.}, 18(2):Paper No. 39, 12, 2021.

\bibitem[LSW{\etalchar{+}}19]{LSWY19}
Tong Li, Zi-Xia Song, Guanghui Wang, Donglei Yang, and Cun-Quan Zhang.
\newblock Antimagic orientations of even regular graphs.
\newblock {\em J. Graph Theory}, 90(1):46--53, 2019.

\bibitem[SH19]{SH19}
Chen Song and Rong-Xia Hao.
\newblock Antimagic orientations of disconnected even regular graphs.
\newblock {\em Discrete Math.}, 342(8):2350--2355, 2019.

\bibitem[SY17]{SY17}
Songling Shan and Xiaowei Yu.
\newblock Antimagic orientation of biregular bipartite graphs.
\newblock {\em Electron. J. Combin.}, 24(4):Paper 4.31, 20, 2017.

\bibitem[SYZ21]{SYZ21}
Zi-Xia Song, Donglei Yang, and Fangfang Zhang.
\newblock Antimagic orientations of graphs with given independence number.
\newblock {\em Discrete Appl. Math.}, 291:163--170, 2021.

\bibitem[Wes96]{W96}
Douglas~B. West.
\newblock {\em Introduction to graph theory}.
\newblock Prentice Hall, Inc., Upper Saddle River, NJ, 1996.

\bibitem[WZ12]{WZ12}
Tsai-Lien Wong and Xuding Zhu.
\newblock Antimagic labelling of vertex weighted graphs.
\newblock {\em J. Graph Theory}, 70(3):348--359, 2012.

\bibitem[Yan19]{Y19}
Donglei Yang.
\newblock A note on antimagic orientations of even regular graphs.
\newblock {\em Discrete Appl. Math.}, 267:224--228, 2019.

\bibitem[Yil13]{Y13}
Zelealem~B. Yilma.
\newblock Antimagic properties of graphs with large maximum degree.
\newblock {\em J. Graph Theory}, 72(4):367--373, 2013.

\end{thebibliography}

\newcommand{\etalchar}[1]{$^{#1}$}

\appendix 
\section{Appendix} \label{APPEND}
Notice that the polynomial $\hat{h}$ from Lemma~\ref{lem:red3vtx} expands to
\begin{align*}
\hat{h}(x_1,x_2,x_3)
&=x_1^6 x_2^3 - x_1^6 x_2^2 x_3 - x_1^6 x_2 x_3^2 + x_1^6 x_3^3 - x_1^5 x_2^4 + 2 x_1^5 x_2^2 x_3^2 - x_1^5 x_3^4 - x_1^4 x_2^5+ 2 x_1^4 x_2^4 x_3\\
&  - x_1^4 x_2^3 x_3^2 - x_1^4 x_2^2 x_3^3 + 2 x_1^4 x_2 x_3^4 - x_1^4 x_3^5 + x_1^3 x_2^6 - x_1^3 x_2^4 x_3^2 - x_1^3 x_2^2 x_3^4 + x_1^3 x_3^6 \\
&- x_1^2 x_2^6 x_3+ 2 x_1^2 x_2^5 x_3^2 - x_1^2 x_2^4 x_3^3 - x_1^2 x_2^3 x_3^4 + 2 x_1^2 x_2^2 x_3^5 - x_1^2 x_2 x_3^6 - x_1 x_2^6 x_3^2 \\
&+ 2 x_1 x_2^4 x_3^4 - x_1 x_2^2 x_3^6 + x_2^6 x_3^3 - x_2^5 x_3^4 - x_2^4 x_3^5 + x_2^3 x_3^6.
\end{align*}

\noindent Thus
\begin{align*}\label{Coefficient}
[&x_1^{a'}x_2^{b'}x_3^{c'}]_h= \sum_{\alpha+\beta+\gamma=9}{{n-4}\choose{k+i+1-\alpha,k-\beta,k-1-\gamma}}[x_1^{\alpha}x_2^{\beta}x_3^{\gamma}]_{\hat{h}}\\
&={{n-4}\choose{k+i+1-6,k-3,k-1-0}}-{{n-4}\choose{k+i+1-6,k-2,k-1-1}}\\
&-{{n-4}\choose{k+i+1-6,k-1,k-1-2}}+{{n-4}\choose{k+i+1-6,k-0,k-1-3}}\\
&-{{n-4}\choose{k+i+1-5,k-4,k-1-0}}+2{{n-4}\choose{k+i+1-5,k-2,k-1-2}}\\
&-{{n-4}\choose{k+i+1-5,k-0,k-1-4}}-{{n-4}\choose{k+i+1-4,k-5,k-1-0}}\\
&+2{{n-4}\choose{k+i+1-4,k-4,k-1-1}}-{{n-4}\choose{k+i+1-4,k-3,k-1-2}}\\
&-{{n-4}\choose{k+i+1-4,k-2,k-1-3}}+2{{n-4}\choose{k+i+1-4,k-1,k-1-4}}\\
&-{{n-4}\choose{k+i+1-4,k-0,k-1-5}}+{{n-4}\choose{k+i+1-3,k-6,k-1-0}}\\
&-{{n-4}\choose{k+i+1-3,k-4,k-1-2}}-{{n-4}\choose{k+i+1-3,k-2,k-1-4}}\\
&+{{n-4}\choose{k+i+1-3,k-0,k-1-6}}-{{n-4}\choose{k+i+1-2,k-6,k-1-1}}\\
&+2{{n-4}\choose{k+i+1-2,k-5,k-1-2}}-{{n-4}\choose{k+i+1-2,k-4,k-1-3}}\\
&-{{n-4}\choose{k+i+1-2,k-3,k-1-4}}+2{{n-4}\choose{k+i+1-2,k-2,k-1-5}}\\
&-{{n-4}\choose{k+i+1-2,k-1,k-1-6}}-{{n-4}\choose{k+i+1-1,k-6,k-1-2}}\\
&+2{{n-4}\choose{k+i+1-1,k-4,k-1-4}}-{{n-4}\choose{k+i+1-1,k-2,k-1-6}}\\
&+{{n-4}\choose{k+i+1-0,k-6,k-1-3}}-{{n-4}\choose{k+i+1-0,k-5,k-1-4}}\\
&-{{n-4}\choose{k+i+1-0,k-4,k-1-5}}+{{n-4}\choose{k+i+1-0,k-3,k-1-6}}.
\end{align*}

For convenience, we refer to the above expression above as the \textit{Coefficient} and aim to show that it is nonzero.
We aim to show the Coefficient is nonzero for all $n\geq3$ and proceed to consider the three cases for $n+5=3k+i$, namely $i=0,1,2$.

\noindent\underline{Case $i=0$:}
Setting $i=0$ in the Coefficient and factoring ${{n-4}\choose{k-3,k-3,k-3}}$ from each term gives 
\begin{align*}
&-2\frac{(k-3)(k-4)}{(k-2)(k-2)}-2\frac{(k-3)^2}{(k-1)(k-2)}+2\frac{k-3}{k-2}-1-2\frac{(k-3)(k-4)(k-5)}{k(k-1)(k-2)}\\
&+2\frac{(k-3)(k-4)}{(k-1)(k-2)}+2\frac{(k-3)(k-4)(k-5)}{(k-1)(k-2)^2}-\frac{(k-3)(k-4)(k-5)(k-6)}{(k-1)^2(k-2)^2}\\
&+2\frac{(k-3)^2(k-4)}{k(k-1)(k-2)}-\frac{(k-3)^2(k-4)^2}{(k+1)k(k-1)(k-2)}+\frac{(k-3)(k-4)(k-5)(k-6)}{(k+1)k(k-1)(k-2)},
\end{align*}
which simplifies to
\[\frac{-96k^3+672k^2-1344k+528}{(k+1)k(k-1)^2(k-2)^2}.\]
To show the Coefficient is not zero, first note that the multinomial ${{n-4}\choose{k-3,k-3,k-3}}$ and the common denominator are positive integers for all $k\geq 3$.
It suffices to show that the numerator is not zero for $k\geq3$.
The derivative of the numerator with respect to $k$ implies that the numerator is decreasing for $k>\frac{7+\sqrt{7}}{3}\approx 3.22$.
Since the numerator is $-48$ and $-240$ when $k=3,4$ respectively, the numerator is negative for $k\geq 3$.

\noindent\underline{Case $i=1$:}
Setting $i=1$ in the Coefficient and factoring ${{n-4}\choose{k-2,k-3,k-3}}$ from each term gives
\begin{align*}
&-2\frac{k-3}{k-1}+1+\frac{(k-3)(k-4)(k-5)}{(k-1)^2(k-2)}-\frac{(k-3)(k-4)(k-5)}{(k+1)k(k-1)}+2\frac{(k-3)^2(k-4)}{(k+1)k(k-1)}\\
&-\frac{(k-3)(k-4)(k-5)(k-6)}{(k+1)k(k-1)(k-2)}-\frac{(k-3)^2(k-4)^2}{(k+2)(k+1)k(k-1)}+\frac{(k-3)(k-4)(k-5)(k-6)}{(k+2)(k+1)k(k-1)},
\end{align*}
which simplifies to
\[\frac{-96k^3+864k^2-2016k+1104}{(k+2)(k+1)k(k-1)^2(k-2)}\]
As before, the multinomial ${{n-4}\choose{k-2,k-3,k-3}}$ and the common denominator are positive integers for all $k\geq3$.
It suffices to show that the numerator is not zero for $k\geq3$.
The derivative of the numerator with respect to $k$ implies that the numerator is decreasing for $k>3+\sqrt{2}\approx 4.41$. 
Since the numerator is 240, 720, 624, and $-624$ for $k=3,4,5,6$ respectively, the numerator is negative for $k\geq6$ and the desired outcome holds.

\noindent\underline{Case $i=2$:}
Setting $i=2$ in the Coefficient and factoring ${{n-4}\choose{k-2,k-2,k-3}}$ from each term gives
\begin{align*}
&1-2\frac{(k-3)(k-4)}{k(k-1)}-\frac{k-2}{k-1}+\frac{(k-3)(k-4)}{(k-1)^2}+\frac{(k-3)(k-4)(k-5)(k-6)}{k^2(k-1)^2}\\
&+\frac{(k-2)(k-3)(k-4)}{(k+1)k(k-1)}-\frac{(k-2)(k-3)^2}{(k+1)k(k-1)}+\frac{(k-3)(k-4)(k-5)}{(k+1)k(k-1)}\\
&-\frac{(k-3)(k-4)(k-5)(k-6)}{(k+1)k(k-1)^2}-\frac{(k-2)(k-3)(k-4)(k-5)}{(k+2)(k+1)k(k-1)}\\
&+2\frac{(k-2)(k-3)^2(k-4)}{(k+2)(k+1)k(k-1)}-\frac{(k-3)(k-4)(k-5)(k-6)}{(k+2)(k+1)k(k-1)}\\
&-\frac{(k-2)(k-3)^2(k-4)^2}{(k+3)(k+2)(k+1)k(k-1)}+\frac{(k-2)(k-3)(k-4)(k-5)(k-6)}{(k+3)(k+2)(k+1)k(k-1)},
\end{align*}
which simplifies to
\[\frac{-96k^4+1248k^3-3456k^2+1728k+2160}{(k+3)(k+2)(k+1)k^2(k-1)^2}.\]
Again, the multinomial ${{n-4}\choose{k-2,k-2,k-3}}$ and the denominator are positive integers for all $k\geq3$.
The derivative of the numerator with respect to $k$ implies that the numerator is decreasing for $k\geq8$.
Since the numerator is 240, 2160, 9072, 20400, 33264, 42480, 40560, 17712, and $-38160$ for $k=2,3,\ldots,10$ respectively, the numerator is negative for $k\geq 10$ and the desired outcome holds.

As a result of these cases, the coefficient is nonzero for all $k\geq 3$ when $i=0,1$ and for all $k\geq 2$ when $i=2$.
This covers all possible values of $n\geq 3$ based on $n+5=3k+i$ for some $i=0,1,2$.

\end{document}